\documentclass[a4paper,10pt]{article}
\usepackage[utf8]{inputenc}
\usepackage{amsmath,amsthm,amssymb,verbatim}
\input{xy}
\xyoption{all}
\bibliographystyle{alpha}

\newcommand{\C}{{\mathbf C}}		
\renewcommand{\P}{{\mathbf P}}		
\renewcommand{\d}{{\mbox d}}		
\newcommand{\gl}{\mathfrak{gl}}		
\newcommand{\h}{{\mathfrak h}}		
\renewcommand{\u}{{\mathfrak u}}		
\newcommand{\G}{{\mathcal G}}		
\newcommand{\Ct}{\widehat{\C}}		
\newcommand{\Et}{\widehat{E}}		
\renewcommand{\O}{{\mathcal O}}		
\newcommand{\E}{{\mathcal E}}		
\newcommand{\End}{{\mathcal End}}	
\newcommand{\Hom}{{\mathcal Hom}}	
\newcommand{\Ext}{{\mathcal Ext}}	
\newcommand{\M}{{\mathcal M}}		
\renewcommand{\L}{{\mathbf L}}		
\newcommand{\Nahm}{{\mathcal N}}	
\newcommand{\Mt}{\widehat{\M}}		
\newcommand{\e}{{\mathbf e}}		
\newcommand{\m}{{\mathbf m}}		
\renewcommand{\H}{{\mathbf H}}		

\DeclareMathOperator{\tr}{tr}			
\DeclareMathOperator{\diag}{diag}		
\DeclareMathOperator{\Gl}{Gl}			
\DeclareMathOperator{\ad}{ad}		
\DeclareMathOperator{\red}{red}		
\DeclareMathOperator{\Pic}{Pic}		
\DeclareMathOperator{\GlobExt}{Ext}	
\DeclareMathOperator{\Mukai}{Mukai}
\DeclareMathOperator{\res}{res}		
\DeclareMathOperator{\gr}{gr}		

\newtheorem{lem}{Lemma}[section]
\newtheorem{rem}{Remark}[section]
\newtheorem{prop}{Proposition}[section]
\newtheorem{thm}{Theorem}

\title{The Plancherel theorem for Fourier--Laplace--Nahm transform for connections on the projective line}
\author{Szil\'ard Szab\'o}
\date{\today}

\begin{document}

\maketitle

\begin{abstract}
We prove that the Fourier--Laplace--Nahm transform for connections on the projective line is a hyper-K\"ahler isometry.
\end{abstract}

\section{Introduction}

The aim of this paper is to prove the 
\begin{thm}\label{thm:main}
The Fourier--Laplace--Nahm transform for solution of the Hitchin--equations on the complex projective line 
with logarithmic singularities at finitely many points and a second-order pole at infinity 
is a  hyper-K\"ahler isometry between the moduli spaces. 
\end{thm}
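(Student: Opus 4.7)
The plan is to reduce the hyper-K\"ahler isometry statement to a more tractable $L^2$-pairing identity at the level of tangent spaces, and then to verify this pairing directly using the analytic description of the Nahm transform as a pushforward followed by a harmonic projection. Since a smooth map between hyper-K\"ahler manifolds is an isometry as soon as it pulls back the three K\"ahler forms $\omega_I, \omega_J, \omega_K$, the first step is to identify, in each of the three complex structures, the holomorphic-symplectic interpretation of the moduli spaces and of the Nahm transform. In complex structure $I$ (Higgs bundle side) the Nahm transform should appear as a Fourier--Mukai-type functor on spectral data, while in complex structures $J,K$ (de Rham side) it should exchange the original meromorphic connection on $\P^1$ with its Fourier--Laplace transform, producing a connection with the prescribed logarithmic and order-two singularities.

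Next, I would describe the tangent space to the moduli space $\M$ at a Hitchin solution $(A,\Phi)$ as the space of $L^2$ harmonic $1$-forms $(a,\varphi)$ satisfying the linearized Hitchin equations together with the Coulomb gauge condition, i.e.\ the first cohomology of a certain elliptic complex with a Dirac-type operator $\mathcal{D}$. The deformation theory of the Nahm construction then realizes the differential of the Nahm transform $\Nahm$ as the composition of multiplication by the coordinate function on $\Ct$ with the orthogonal projection onto $\ker \mathcal{D}^*$, the fibrewise harmonic space defining $\Mt$. The tangent vectors to $\Mt$ are, analogously, $L^2$ harmonic forms for the transformed Dirac operator.

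The core of the proof is then a Parseval-type identity. I would compute the $L^2$ inner product of two transformed tangent vectors and, after integrating by parts against the coupled Dirac operator and using the defining equation of its harmonic projector, rewrite it as the $L^2$ inner product of the original tangent vectors on $\P^1$. This calculation must be performed in each of the three complex structures, or, more efficiently, at the level of the hyper-K\"ahler moment map triple: one shows that $\d\Nahm$ intertwines the standard quaternionic action on harmonic spinors on the base and on its dual, from which the preservation of $\omega_I, \omega_J, \omega_K$ follows simultaneously.

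The main obstacle will be the analysis near the singularities. Because of the logarithmic punctures and the order-two pole at infinity, the relevant Dirac operators are not Fredholm in the naive $L^2$ sense; one must work in weighted Sobolev spaces adapted to the parabolic and irregular types at each puncture, and control the leading and subleading asymptotic terms of harmonic spinors there. The integration by parts used in the Parseval computation produces boundary contributions at the singularities which, a priori, could spoil the identity; the delicate step is to show these boundary terms vanish thanks to matching conditions between the parabolic data of $(A,\Phi)$ and the Stokes/residue data of the transformed object, so that the naive $L^2$-pairing argument goes through without correction.
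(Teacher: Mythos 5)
Your strategy is the direct analytic one of Braam--van Baal \cite{bvb}: realize $D\Nahm$ as multiplication by the dual coordinate followed by projection onto the kernel of the transformed Dirac operator, and prove a Parseval identity for harmonic representatives by integration by parts. This is a genuinely different route from the one the paper takes. The paper deliberately avoids that computation: it cites prior work for the preservation of the Dolbeault and de Rham complex structures $I$ and $J$, observes (Proposition \ref{prop:holomsymp-hypK}) that it then suffices to pull back the single holomorphic symplectic form $\Omega_{\hat I}$ to $\Omega_I=\omega_J+\sqrt{-1}\,\omega_K$, and proves this last point purely holomorphically: $\Omega_I$ is identified with the Mukai pairing on $\GlobExt^1_{\O_Z}(M,M)$ for the spectral sheaf $M$ on the ruled surface $Z$ (by lifting tangent vectors to Ext--classes supported on the spectral curve, taking the Yoneda product and applying Serre duality), and since $\Nahm$ preserves the spectral sheaf the Mukai pairing, hence $\Omega_I$, is manifestly preserved. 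Your approach, if completed, would give $g$ and all three K\"ahler forms at once without the spectral correspondence; the paper's approach trades the Dirac/Green's-operator analysis for an algebraic residue computation plus the weighted estimates of Biquard--Boalch needed to control the limiting integral near the punctures and the branch locus.

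As written, however, the proposal has a genuine gap at exactly the point you label ``delicate.'' The Parseval identity is asserted, not derived: the $L^2$ pairing of two projected tangent vectors on $\Mt$ involves the Green's operator of the coupled Dirac Laplacian on $\P^1$, and converting it into the $L^2$ pairing of the original tangent vectors requires the Clifford-algebra/Weitzenb\"ock manipulations of \cite{bvb} together with an actual cancellation of boundary terms. In the present setting the punctures carry parabolic weights and the point at infinity a second-order pole, so harmonic representatives behave like $|z|^{-1+\delta}$ at the logarithmic points and are governed by the irregular type at infinity; whether the boundary contributions produced by the integration by parts vanish is precisely the content of the theorem in this approach, and ``matching conditions between the parabolic data and the Stokes/residue data'' names the needed statement rather than proving it. Until that cancellation is established --- including the contribution of the points $\xi=\xi_l$ where the index bundle is only extended by a Hecke-type modification --- the argument is a plausible programme rather than a proof.
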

Fourier--Laplace--Nahm transform for such solutions was defined and studied in \cite{Sz-these} and subsequently 
\cite{Sz-FourierLaplace}, \cite{ASz}; the definition of the moduli spaces involved and their hyper-K\"ahler structure was studied in \cite{Biq-Boa}. 

As the Fourier--Laplace--Nahm transform is a non-linear analog of classical Fourier transform and the 
Riemannian metric on the moduli spaces is given by the $L^2$-pairing of harmonic representatives 
of tangent vectors, the above theorem is a non-linear analog of the classical Plancherel theorem 
stating that Fourier transform is an isometry on the space of $L^2$ functions on the line. 
The isometry property of similar transforms has been studied by several authors \cite{bvb}, \cite{mac}. 
They use various methods: the proof in \cite{bvb} is heavily computational and makes use of  
Clifford algebra identities to link the Green's operator of the original  solution to the Laplacian 
of the transformed one; \cite{mac} directly links the hyper-K\"ahler potentials of the spaces of instantons 
to that of ADHM--data. This latter method is unlikely to work in the case of solutions of the Hitchin--equations  
because no explicit potential is known to exist for the hyper-K\"ahler metric in this case. 
However, it is suggested as a concluding remark in \cite{bvb} that their computations should have a purely 
holomorphic counterpart in terms of one of the complex structures  --- this is the point of view we adopt here, by making explicit the 
relationship of the holomorphic symplectic structure for the Dolbeault complex structure 
on the moduli space of solutions to the Hitchin--equations to a natural holomorphic symplectic structure 
(the Mukai--pairing) defined on moduli spaces of some sheaves on surfaces. 
The sheaf to take for a Higgs--bundle will be its spetral sheaf, defined on a ruled surface. 

Fourier--Laplace transformation of holonomic $\mathcal{D}$-modules has been studied by many authors, e.g. \cite{Mal} \cite{Sab}. 
From the point of view of Hamiltonian systems this transform is also closely related to Harnad's duality \cite{Har}. 
In particular, as explained in \cite{Boa-irreg}  in the Fuchsian case this transformation corresponds to the reflection about the central root 
in the Kac--Moody root system associated to a star-shaped quiver; more generally, in the case of second-order 
singularities treated in this paper it correponds to switching between two of the three possible readings of the 
associated complete bipartite graph. 

In Section \ref{sec:hK}, we set the notation and collect some known facts about hyper-K\"ahler manifolds. 
In Section \ref{sec:mod-Hit} we describe the hyper-K\"ahler moduli spaces of singular solutions of the Hitchin--equations 
along the lines of \cite{hit} and \cite{Biq-Boa}. 
In Section \ref{sec:sheaves} we recall the Beauville--Narasimhan--Ramanan construction and discuss about 
moduli spaces of sheaves on Poisson manifolds with emphasis on the Mukai pairing. 
In Section \ref{sec:FLN} we define the Fourier--Laplace--Nahm transform on the moduli spaces 
and prove Theorem \ref{thm:main}. 

The author would like to thank Philip Boalch for constant encouragement and Jacques Hurtubise and Tony Pantev 
for useful discussions. The paper was written under support of the Advanced Grant 
”Arithmetic and physics of Higgs moduli spaces” no. 320593 of the European Research Council and the Lend\"ulet 
"Low Dimensional Topology" program of the Hungarian Academy of Sciences. 


\section{Hyper-K\"ahler geometry}\label{sec:hK}

Recall that a hyper-K\"ahler manifold is a Riemannian manifold $(M,g)$ of real dimension $4k$ 
together with integrable almost-complex structures $(I,J,K)$ on its tangent bundle, 
satisfying 
\begin{itemize}
\item the quaternion relations $I^2=J^2=K^2=-\mbox{Id}_{TM}$, $IJ=-JI=K$
\item the polarisation conditions: $\omega_I(\cdot , \cdot) = g(\cdot ,I \cdot)$ 
is a symplectic form, i.e. non-degenerate and closed; and similarly for 
$\omega_J(\cdot , \cdot) = g(\cdot ,J \cdot)$ and $\omega_K(\cdot , \cdot) = g(\cdot ,K \cdot)$. 
\end{itemize}
A holomorphic symplectic manifold is a complex analytic manifold $X$ of complex 
dimension $2k$ endowed with a holomorphic non-degenerate $2$-form $\Omega\in H^{2,0}(X,\C)$. 
Any hyper-K\"ahler manifold is a holomorphic symplectic manifold in three different ways: 
\begin{itemize}
\item $X_I = (M,I)$, $\Omega_I = \omega_J + \sqrt{-1}\omega_K$
\item $X_J = (M,J)$, $\Omega_J = \omega_K + \sqrt{-1}\omega_I$
\item $X_K = (M,K)$, $\Omega_K = \omega_I + \sqrt{-1}\omega_J$.
\end{itemize}
If $(M, g, I, J, K)$ and $(\hat{M},\hat{g},\hat{I},\hat{J},\hat{K})$ are hyper-K\"ahler manifolds of the same dimension $4k$ 
and $f:M\to \hat{M}$ a diffeomorphism then $f$ is said to be a hyper-K\"ahler isometry 
if it maps $g$ to $\hat{g}$, $I$ to $\hat{I}$, $J$ to $\hat{J}$ and $K$ to $\hat{K}$. 
In concrete terms, if the tangent map of $f$ at $m\in M$ is denoted by $D_mf$ then these requirements read as follows: 
for any $m\in M$ we have $g_m = \hat{g}_{f(m)} \circ (D_mf \otimes D_mf)$, $D_mf \circ I_m= \hat{I}_{f(m)} \circ D_mf$,  
$D_mf \circ J_m= \hat{J}_{f(m)} \circ D_mf$ and $D_mf \circ K_m= \hat{K}_{f(m)} \circ D_mf$. 
It is clear that this implies that the map 
$$
	f: X_I \to \hat{X}_{\hat{I}}
$$
is a complex analytic map and $f^*\Omega_{\hat{I}} = \Omega_I$, and similarly for $J$ and $K$. 
Conversely, one might ask whether a map satisfying the above conditions for the complex 
structure $I$ is necessarily a hyper-K\"ahler isometry. This is not quite true, however 
we have the following well-known statement. 

\begin{prop}\label{prop:holomsymp-hypK}
If $f:M\to \hat{M}$ maps $I,J$ to $\hat{I}, \hat{J}$ respectively and $f^*\Omega_{\hat{I}} = \Omega_I$ then $f$ is 
a hyper-K\"ahler isometry. 
\end{prop}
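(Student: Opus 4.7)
The plan is to bootstrap the given information on $I$, $J$ and $\Omega_I$ to recover first the remaining Kähler form, then the metric, and finally the complex structure $K$.

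First I would decompose the hypothesis $f^*\Omega_{\hat I}=\Omega_I$ into real and imaginary parts. Using the identity $\Omega_I=\omega_J+\sqrt{-1}\omega_K$ on both sides, this single complex equation gives at once
$$
f^*\hat\omega_J = \omega_J,\qquad f^*\hat\omega_K = \omega_K.
$$
So two of the three Kähler forms are already known to be preserved, and only $\omega_I$ remains.

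Next I would exploit the key fact that a Kähler form together with its compatible complex structure recovers the metric, via $g(X,Y)=\omega_J(X,JY)$. Since $Df\circ J=\hat J\circ Df$ by hypothesis and $f^*\hat\omega_J=\omega_J$ by the previous step, a direct computation gives
$$
g(X,Y)=\omega_J(X,JY)=\hat\omega_J(Df(X),Df(JY))=\hat\omega_J(Df(X),\hat J\,Df(Y))=\hat g(Df(X),Df(Y)),
$$
so $f^*\hat g=g$, which is the isometry property. The compatibility with $K$ is then automatic from the quaternion relation $K=IJ$: indeed
$$
Df\circ K=Df\circ I\circ J=\hat I\circ Df\circ J=\hat I\circ\hat J\circ Df=\hat K\circ Df.
$$
Finally, one recovers $f^*\hat\omega_I=\omega_I$ from $\omega_I(\cdot,\cdot)=g(\cdot,I\cdot)$ in the same way, closing the circle.

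There is no real obstacle here; the only subtlety is to remember that $g$, $I$, $J$, $K$ and the three Kähler forms are not independent data but are linked by the quaternionic and polarisation identities, so that preserving a holomorphic symplectic form in one complex structure plus a second compatible complex structure forces everything else. The worst that could happen is a sign error in unwinding $\Omega_I=\omega_J+\sqrt{-1}\omega_K$ versus $\omega_I(\cdot,\cdot)=g(\cdot,I\cdot)$, but these are fixed once and for all by the conventions stated in the excerpt.
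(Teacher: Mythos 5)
Your argument is correct and essentially identical to the paper's: take real and imaginary parts of $f^*\Omega_{\hat I}=\Omega_I$ to get preservation of $\omega_J$, recover $g$ from $\omega_J$ and $J$, and obtain $K$ from the quaternion relation $K=IJ$. The only blemish is the sign you already anticipated: with the convention $\omega_J(\cdot,\cdot)=g(\cdot,J\cdot)$ one has $g(X,Y)=\omega_J(X,-JY)$ rather than $\omega_J(X,JY)$, exactly as in the paper's displayed computation, and this does not affect the argument.
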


\begin{proof}
As $K=IJ$ and $\hat{K}=\hat{I} \hat{J}$ it follows from the assumptions on $I,J,\hat{I}, \hat{J}$ that $f$ maps $K$ to $\hat{K}$. 
On the other hand, from the assumption on $ \Omega_I, \Omega_{\hat{I}}$ it follows that $\omega_J = \Im \Omega_I$ 
is pulled back to $\omega_{\hat{J}} = \Im \Omega_{\hat{I}}$ by $f$. We infer that for any pair of vectors $Y,Z\in T_mM$ 
we have 
$$
	\hat{g}(D_mf(Y),D_mf(Z)) = \omega_{\hat{J}}(D_mf(Y),-\hat{J}_{f(m)}D_mf(Z)) = \omega_J(Y,,-J_mZ) = g(Y,Z), 
$$
i.e. $g$ is mapped to $\hat{g}$ by $f$.
\end{proof}

\section{Moduli spaces of singular solutions of the Hitchin--equations on curves}\label{sec:mod-Hit}

Let $C$ be a projective curve over $\C$, $E$ a smooth Hermitian vector bundle of rank $r$ on $C$, 
$\nabla_A$ a unitary connection in $E$ with curvature $F_A$ and $\Phi$ a smooth section of the vector bundle 
$\Omega^{1,0}\otimes \gl(E)$ over $C$ (where $\gl(E)$ refers to the bundle of complex endomorphisms of $E$). 
We denote by $\Phi^*$ the $(0,1)$-form conjugate to $\Phi$ with respect to the Hermitian structure: 
if in a local holomorphic coordinate $z$ of $C$ and unitary frame we have $\Phi = \phi \d z$ for 
a smooth $1$-parameter family $\phi(z)$ with values in $\gl_r(\C)$, then $\Phi^*=\bar{\phi}\d \bar{z}$.  
Further, we let $\bar{\partial}_A$ stand for the $(0,1)$-part of $\nabla_A$. Hitchin's equations \cite{hit} then read 
\begin{align}
	F_A + [\Phi,\Phi^*] & = 0 \label{hit1} \\
	\bar{\partial}_A \Phi & = 0. \label{hit2}
\end{align}
The solutions to this equation form a (finite-dimensional) complete hyper-K\"ahler manifold with the Atiyah--Bott metric coming 
from an infinite-dimensional quotient construction; let us briefly recall the description of this hyper-K\"ahler structure, 
or rather the holomorphic symplectic structure associated to one of the complex structures $I$ (the Dolbeault complex structure). 
Here we identify the space of unitary connections $A$ with the space of holomorphic structures $\bar{\partial}_A$ 
via the Chern--connection construction: for any unitary connection we associate its $(0,1)$-part $\bar{\partial}_A$ as above; 
conversely for any holomorphic structure $\bar{\partial}$ in $E$ there exists a unique unitary connection $A$ (known 
as the Chern--connection) for which $\bar{\partial} = \bar{\partial}_A$. 
Let us define $\G$ (respectively $\G_{\C}$) as the group of $\u(E)$- (respectively $\gl(E)$-)valued gauge transformations of $E$. 
Then equation (\ref{hit2}) is $\G_{\C}$-invariant and in every $\G_{\C}$-orbit of a stable solution of (\ref{hit2}) there exists a unique 
$\G$-orbit of solutions of (\ref{hit1}), (\ref{hit2}). 
The deformation theory of the equations at $(\bar{\partial}_A,\Phi)$ is then governed by the elliptic complex 
\begin{equation}\label{eq:l2compl}
	L^2 (C,\Omega^0\otimes \gl(E)) \xrightarrow{\d_1} L^2 (C,(\Omega^{1,0} \oplus \Omega^{0,1})\otimes \gl(E)) \xrightarrow{\d_2} L^2 (C,\Omega^2\otimes \gl(E)) 
\end{equation}
with 
\begin{equation}\label{eq:d1}
	\d_1(\dot{g}) = \bar{\partial}_A(\dot{g}) + [\Phi,\dot{g}]
\end{equation}
the infinitesimal action of $\G_{\C}$ 
(and where $\bar{\partial}_A$ acts on $\dot{g} \in L^2 (C,\Omega^0\otimes \gl(E))$ by the adjoint-action) and 
\begin{equation}\label{eq:d2}
	\d_2(\dot{A} , \dot{\Phi}) = \bar{\partial}_A(\dot{\Phi}) + [\dot{A},\Phi]
\end{equation}
for any 
\begin{equation}\label{eq:tangentvector}
	\dot{A} \in L^2 (C,\Omega^{0,1}\otimes \gl(E)) , \quad \dot{\Phi} \in L^2 (C,\Omega^{1,0}\otimes \gl(E)) ,
\end{equation}
and the action of $\bar{\partial}_A$ on $\dot{\Phi}$ is again induced by the adjoint-action. 
Alternatively, the cohomology groups of the elliptic complex (\ref{eq:l2compl}) compute the hypercohomology groups $\H^{\bullet}$ of 
the algebraic complex of sheaves on $C$ 
\begin{equation}\label{eq:Dolcompl}
		\End(\E) \xrightarrow{\ad_{\Phi}} \End(\E)\otimes K_C 
\end{equation}
where $\E = (E,\bar{\partial}_A)$ is the holomorphic vector bundle defined by the operator $\bar{\partial}_A$ and 
$\End(\E)$ its bundle of holomorphic endomorphisms. 
In this description, the Dolbeault complex structure $I$ and associated holomorphic symplectic form $\Omega_I$ on the 
moduli space are defined by 
\begin{align}
	I(\dot{A} , \dot{\Phi}) & =  (\sqrt{-1}\dot{A} , \sqrt{-1}\dot{\Phi}) \label{eq:Dol-hol-str}\\
	\Omega_I ((\dot{A} , \dot{\Phi}), (\dot{B} , \dot{\Psi})) & = \int_C \tr (\dot{\Psi}\wedge\dot{A} - \dot{\Phi}\wedge\dot{B} ).
	\label{eq:Dol-hol-sympl-str}
\end{align}

Hitchin's results were generalised to certain singular solutions by O. Biquard and P. Boalch \cite{Biq-Boa}. 
Namely, fix points $z_1,\ldots,z_n\in C$ and consider meromorphic Higgs fields $\Phi$ such that 
\begin{itemize} 
\item with respect to a holomorphic local coordinate $z$ centered at $z_j$ and 
some local $\bar{\partial}_A$-holomorphic frame of $\E$ the endomorphism $\Phi$ admits an expansion of the form 
\begin{equation}\label{eq:formaltype}
	\Phi = (A^j_{m_j} z^{-m_j-1} + \cdots + A^j_1 z^{-2} ) \d z
\end{equation}
(called its irregular type) 
up to terms of pole order at most $1$ where $A^j_m\in\gl_r(\C)$ are fixed diagonal matrices.
\end{itemize}
We let $H_j$ denote the centraliser of the irregular type (\ref{eq:formaltype}) in $\Gl_r(\C)$ 
(i.e. the group of matrices simultaneously commuting with all terms $A^j_m$ of the irregular type) 
and $\h_j\subset\gl_r(\C)$ stand for its Lie algebra. Fix also rational numbers 
$$
	0 \leq \tilde{\alpha}^j_1 \leq \cdots \leq \tilde{\alpha}^j_r <1
$$
called parabolic weights. The Hermitian metric $h$ near $z_j$ is required to be mutually bounded with 
\begin{equation}\label{eq:adaptedmetric}
	\diag (|z|^{2\tilde{\alpha}^j_l})_{l=1}^r
\end{equation}
with respect to a holomorphic local frame as above (i.e. in which $A^j_{m_j}$ are diagonal). 
Let us relabel the distinct parabolic weights by 
\begin{equation}\label{eq:parweights}
		0 \leq \alpha^j_1 < \cdots < \alpha^j_{r'(j)} <1
\end{equation}
for some $r'(j)\leq r$. The behaviour of $h$ can then be used to define a flag 
\begin{equation}\label{eq:parflag}
	E_{|z_j}  = F^j_0 \supseteq F^j_1 \supseteq \cdots \supseteq F^j_{r'(j)-1} \supseteq  F^j_{r'(j)} = \{ 0\}
\end{equation}
in the fiber of $E$ at $z_j$ by letting $F^j_{l-1}$ consist of the evaluations at $z_j$ of the local sections whose 
squared norm can be bounded from above by 
$$
	\mbox{constant}\times |z|^{2\alpha^j_l}. 
$$
In addition to the above assumptions on the irregular type, one also needs to assume that 
\begin{itemize}
\item the residue $\res_{z_j}(\Phi)$ belongs to $\h_j$ and preserves the filtration (\ref{eq:parflag}); in particular, it induces  
	endomorphisms $\gr_l^F\res_{z_j}(\Phi)\in \h_j \cap \gl(F_{l-1}^j/F_l^j)$ on the graded pieces $F_{l-1}^j/F_l^j$ of (\ref{eq:parflag}) 
\item the graded pieces $\gr_l^F\res_{z_j}(\Phi)$ of the residue of $\Phi$ at $z_j$ lie in prescribed adjoint orbits  
\begin{equation}\label{eq:adorbit}
	\O_l^j\subset\h_j
\end{equation} 
\end{itemize}
It is then shown in Theorem 5.4 of \cite{Biq-Boa} that if the irregular type (\ref{eq:formaltype}), adjoint orbit 
(\ref{eq:adorbit}) and parabolic weights $\alpha^j_l$ are fixed so that they satisfy the above conditions then the irreducible 
solutions of the system (\ref{hit1}), (\ref{hit2}) having these prescribed singular behaviours near the punctures 
(including (\ref{eq:adaptedmetric})) still form a smooth hyper-K\"ahler manifold 
\begin{equation}\label{eq:modulispace}
	\M 
\end{equation}
which in addition is complete if there are no reducible solutions and the adjoint orbits $\O^j_l$ of the residues are orbits of semi-simple matrices. 

The analytic deformation complex is a certain weighted Sobolev--space analog of (\ref{eq:l2compl}) and the tangent space of $\M$ at a solution 
$(\bar{\partial}_A,\Phi)$ is given by the first $L^2$-cohomology $L^2H^1(\gl(E))$ of the deformation complex for the Euclidean metric near 
the punctures. 
The associated Dolbeault holomorphic symplectic structure is then given by the same equations 
(\ref{eq:Dol-hol-str}), (\ref{eq:Dol-hol-sympl-str}) as in the non-singular case. 
The Dolbeault complex (\ref{eq:Dolcompl}) governing the algebraic deformations of the moduli space also 
needs to be modified accordingly as follows
\begin{equation}\label{eq:Dolcompl-irreg}
	\widetilde{\End}(\E) \xrightarrow{\ad_{\Phi}} \End_{iso}(\E)\otimes K_C
 \end{equation}
where 
\begin{itemize}
\item $\End_{iso}(\E)$ stands for the sheaf of meromorphic endomorphisms $\varphi$ of $\E$ with at most a simple pole near every $z_j$ 
	and with residue $\res_{z_j}(\varphi)\in\h_j$ respecting the parabolic flag (\ref{eq:parflag}) such that 
	each graded piece $\gr_l^F\res_{z_j}(\varphi)$ belongs to the image of the map $\ad_{\gr_l^F\res_{z_j}(\Phi)}$ 
	(called isomonodromic endomorphisms)
\item $\widetilde{\End}(\E)$ consists of the local holomorphic sections of $\End(\E)$ whose image by $\ad_{\Phi}$ belongs to 
	$\End_{iso}(\E)\otimes K_C$.
\end{itemize}
\begin{rem}
It would certainly be possible to describe $\widetilde{\End}(\E)$ explicitly as a certain iterated Hecke transform of $\End(\E)$ along a 
nested subsequence of Levi subgroups at the points $z_j$ corresponding to the successive centralisers of the matrices in the 
irregular type; however since such a description will be not needed for our purposes we do not give the details here. 
\end{rem}
\begin{prop}\label{prop:DolL2}
The deformation theory of the moduli space (\ref{eq:modulispace}) at a given solution $(A,\Phi)$ is governed 
by the hypercohomology groups of (\ref{eq:Dolcompl-irreg}).
\end{prop}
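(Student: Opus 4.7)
The plan is to compare the analytic deformation complex of Biquard--Boalch---a suitable weighted Sobolev refinement of the $L^2$ complex (\ref{eq:l2compl})---with the algebraic complex (\ref{eq:Dolcompl-irreg}), and show that they have isomorphic (hyper)cohomology in each degree. Since by the very definition of $\M$ in \cite{Biq-Boa} the cohomology of the analytic complex computes infinitesimal automorphisms, deformations and obstructions of $(A,\Phi)$ in $\M$, it suffices to produce a quasi-isomorphism between the two pictures.

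First I would work locally around each puncture $z_j$. In a $\bar\partial_A$-holomorphic local frame in which the irregular type (\ref{eq:formaltype}) is diagonal and the Hermitian metric is mutually bounded with (\ref{eq:adaptedmetric}), the $L^2$ condition for a tangent vector $(\dot A,\dot\Phi)$ as in (\ref{eq:tangentvector}) translates into precise growth/decay bounds weighted by the parabolic weights $\alpha^j_l$. A direct computation then shows that the $\G_\C$-infinitesimal-action of a holomorphic $\dot g$ produces a tangent vector lying in those weighted spaces exactly when $\dot g$ is a local section of $\widetilde{\End}(\E)$: namely, when $\ad_\Phi(\dot g)$ has at most a simple pole, with residue in $\h_j$, preserving the parabolic flag (\ref{eq:parflag}), and satisfying the isomonodromic graded condition imposed by the fixed adjoint orbits $\O^j_l$. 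Symmetrically, the linearised Higgs field $\dot\Phi$ is $L^2$ and tangent to the space of Higgs fields with the prescribed singular behaviour if and only if it is a local section of $\End_{iso}(\E)\otimes K_C$. These local identifications are the technical heart of the argument and the only place where the specific normal forms of \cite{Biq-Boa} are really used.

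With these local models in place, I would exhibit a map of complexes from the analytic $L^2$-complex to a fine (e.g.\ Dolbeault) resolution of (\ref{eq:Dolcompl-irreg}), sending a pair of $L^2$ forms to its underlying $(0,*)$-part viewed in $\widetilde{\End}(\E)$ and $\End_{iso}(\E)\otimes K_C$ respectively. By the local computation of the previous paragraph this map is well defined at the sheaf level; the fact that it is a quasi-isomorphism is then a standard Hodge-theoretic statement (as in Simpson's work in the tame case and its extension by Biquard--Boalch): harmonic representatives exist in the weighted Sobolev $L^2$ complex (away from the punctures the standard Hodge theory applies, near each $z_j$ the local model provides the required Fredholm properties), and they can be identified with hypercohomology classes via the usual spectral sequence associated to (\ref{eq:Dolcompl-irreg}).

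The main obstacle, and the step that requires real work, is the local matching at the irregular singularities: one must verify that the $L^2$-growth conditions on $\dot g$ near $z_j$, together with the constraint that $(A,\Phi)+\epsilon(\dot A,\dot\Phi)$ still have the prescribed irregular type and parabolic data, are exactly equivalent to membership in $\widetilde{\End}(\E)$, and likewise for $\dot\Phi$ and $\End_{iso}(\E)\otimes K_C$. This amounts to an iterated Hecke-type adjustment of $\End(\E)$ along the successive centralisers attached to the terms $A^j_{m_j},\ldots,A^j_1$ of the irregular type, as indicated in the remark preceding the proposition; once this is carried out, the global comparison follows from standard arguments.
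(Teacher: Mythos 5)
Your proposal is not wrong, but it takes a noticeably different and much heavier route than the paper's own argument. The paper's proof never touches the weighted Sobolev spaces or any Hodge-theoretic quasi-isomorphism: it simply differentiates the constraints defining the moduli problem along a $1$-parameter family $(A(t),\Phi(t))$. Fixing the irregular type forces $\dot{\Phi}$ to have a pole of order at most one; fixing the flag-preservation forces $\dot{\Phi}$ to preserve the parabolic flag; and fixing the adjoint orbits of the graded residues forces $\gr_l^F\res_{z_j}(\dot{\Phi})$ to lie in the tangent space of the orbit, i.e.\ in $\mathrm{Im}(\ad_{\gr_l^F\res_{z_j}(\Phi)})$ --- which is exactly the membership condition for $\End_{iso}(\E)\otimes K_C$. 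Together with the observation that an infinitesimal stabiliser $\dot{g}$ lies in $\ker(\bar{\partial}_A+\ad_{\Phi})$, this identifies the first-order deformations with classes in the complex (\ref{eq:Dolcompl-irreg}). What your approach would buy, if completed, is strictly more: an actual isomorphism between the weighted $L^2$-cohomology of the Biquard--Boalch complex and the hypercohomology of (\ref{eq:Dolcompl-irreg}), realised by harmonic representatives --- a statement the paper implicitly relies on later (when it pairs harmonic representatives satisfying $|a_{ij}|\leq C|z|^{-1+\delta}$) but does not prove here. The cost is that you defer precisely the hardest step, the local matching at the irregular singularities via the iterated Hecke-type description of $\widetilde{\End}(\E)$, which the paper deliberately sidesteps (see the remark preceding the proposition, where the author declines to spell out that description because it is not needed). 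So your sketch is a reasonable program for a stronger statement, but as written it leaves its central technical claim unverified, whereas the paper's proof, though more modest in scope, is self-contained.
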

\begin{proof}
Let $g(t)\in\G_{\C}$ be a $1$-parameter family of global gauge transformations fixing $(A,\Phi)$ with tangent vector $\dot{g}$ 
an $L^2$-function with values in the bundle of endomorphisms of $E$. Then from (\ref{eq:d1}) one sees that 
$\dot{g}\in \ker(\bar{\partial}_A + \ad_{\Phi})$.  

Let  $(A(t),\Phi(t))$ be a $1$-parameter family of deformations parameterised by $t\in\C$ and consider the associated tangent vector 
$T = (\dot{A},\dot{\Phi})$ where $\dot{A}$ represents the deformation of the underlying parabolic vector bundle and 
$\dot{\Phi}$ that of the Higgs field as in (\ref{eq:tangentvector}). 
Then, as at any $z_j$ the Higgs fields $\Phi(t)$ have the same irregular type for all values of $t$, 
one sees that $\dot{\Phi}$ may have a pole of at most first order. 
Moreover, since $\Phi(t)$ preserve the parabolic flag, the same thing must also hold for $\dot{\Phi}$. 
Finally, as for any $l$ the adjoint orbit of the graded piece $\gr_l^F\res_{z_j}(\Phi(t))$ is constant in $t$, 
there exist local holomorphic gauge transformations $g(t)$ with $g(0)\in H_j$ and respecting the flag (\ref{eq:parflag}) 
so that the graded pieces of the residue of $g(t)\cdot \Phi(t)$ do no depend on $t$. 
By differentiation one finds that $\gr_l^F\res_{z_j}(\dot{\Phi})$ must lie in the tangent space of the adjoint orbit, 
i.e. in $\mbox{Im}(\ad_{\gr_l^F\res_{z_j}(\Phi)})$. 
\end{proof}

\section{Moduli spaces of stable sheaves of dimension $1$ on Hirzerbuch--surfaces}\label{sec:sheaves}

Let now the pair $(\bar{\partial}_A,\Phi) = (\E,\Phi)$ be a meromorphic solution of the equation (\ref{hit2}) as 
in Section \ref{sec:mod-Hit} and consider 
\begin{equation}\label{eq:divisor}
	D = (m_1+1) z_1 + \cdots + (m_n+1) z_n 
\end{equation}
as an effective divisor on $C$, $L = K_C(D)$ the line bundle of meromorphic $1$-forms with poles 
of order at most $m_l+1$ at $z_l$ for $l\in\{1,\ldots,n\}$ and $L^{\vee}$ its dual holomorphic line bundle. Let 
\begin{equation}\label{eq:Z}
	Z = \P(\O \oplus L) \xrightarrow{\pi} C
\end{equation}
be the associated Hirzerbuch--surface, i.e. the fibre-wise compactification of the total space $\L$ of $L$. 
Let $\O(1)$ denote the relatively ample line bundle on $Z$, $y\in H^0(Z,\O(1))$ the canonical section 
of $\O(1)$ and $x\in H^0(Z,\O(1)\otimes \pi^*L)$ the canonical section of $\O(1)\otimes \pi^*L$. 
Associate to $(\E,\Phi)$ the sheaf $M = M_{(\E,\Phi)}$ on $Z$ defined by the exact sequence 
\begin{equation}\label{eq:Mresolution}
	0\to \pi^*(\E \otimes L^{\vee})  \xrightarrow{\pi^*\Phi\otimes y - \pi^*{Id}_{\E}\otimes x} \pi^* \E\otimes \O(1) \to M_{(\E,\Phi)}\to 0. 
\end{equation}
Then \cite{BNR}, assuming the support $S_{(\E,\Phi)}$ of $M_{(\E,\Phi)}$ is reduced, 
$M_{(\E,\Phi)}$ is a torsion free sheaf of rank $1$ on $S_{(\E,\Phi)}$ and this association is an equivalence: 
the holomorphic bundle $\E$ is equal to the push-forward $\pi_*M$ and $\Phi$ is 
induced by the push-forward of the multiplication map by $x$ 
$$
	x : M \to M \otimes \pi_*L \otimes \O(1). 
$$
Throughout the paper we will denote the restriction of (\ref{eq:Z}) to $S_{(\E,\Phi)}$ by 
$$
	\pi_S:S_{(\E,\Phi)}\to C. 
$$


At this point we notice that $Z$ is a Poisson surface for the canonical Liouville symplectic form $\omega = \d \theta$ on $\L\subset Z$ 
with degeneracy divisor  
\begin{equation}\label{eq:degen-divisor}
	D_{\infty} = D + 2(y), 
\end{equation}
where 
$(y)$ is the divisor cut out by the section $y$ (divisor at infinity). 
The space of line bundles of a given degree $d$ supported on a deformation of a given smooth curve $S_0$ in a Poisson surface $X$ 
(the relative Picard bundle) has been studied by Donagi and Markman in \cite{DM}. It is shown in Theorem 8.3 {\em loc. cit.} that 
if $B$ stands for the Zariski open subset of the component containing $S_0$ of the Hilbert scheme parameterising smooth irreducible 
curves $S$ on $X$ not contained in $D_{\infty}$ then $B$ is smooth, moreover the relative Picard bundle 
\begin{equation}\label{eq:relPic}
	\Pic^d(X) \to B
\end{equation}
has a canonical Poisson structure whose symplectic leaves are obtained by prescribing the intersection of the curves $S$ with $D_{\infty}$. 
In the case of the surface (\ref{eq:Z}) with (\ref{eq:degen-divisor}) this means that we prescribe the intersection schemes of the curves $S$ with 
the fibers of $\pi$ over the points $z_i$ taken with multiplicity $(m_i+1)$, i.e. the jets of order $(m_i+1)$ of $S$. 
If $S$ is the spectral curve of a Higgs bundle as in the previous paragraph, this amounts to fixing the singularity behaviour 
of the Higgs field at the punctures. We also note by passing that the generic spectral 
curve in a given symplectic leaf might not be smooth in $Z$ precisely for this reason (e.g. $m_i=0$ and a non-regular semi-simple 
residue imply a node of $S$ in the fiber over $z_i$); however, the generic spectral curve is smooth in a suitable blow-up of $Z$ \cite{ASz}. 
For any $M\in\Pic^d$ with smooth underlying curve $S$ the deformation theory of $\Pic^d$ is governed by the global 
$\GlobExt$-groups $\GlobExt_{\O_Z}^{\bullet}(M,M)$ and the Poisson structure on (\ref{eq:relPic}) is a generalisation of Mukai's pairing 
$\Omega_{\Mukai}$ \cite{Muk} which is in turn defined by the Yoneda product 
\begin{equation}\label{eq:Mukaiform}
	\GlobExt_{\O_Z}^1(M,M) \times \GlobExt_{\O_Z}^1(M,M) \to \GlobExt_{\O_Z}^2(M,M)
\end{equation}
followed by Serre duality. 

We may rephrase the discussion of the previous two paragraphs by saying that there is a one-to-one correspondence between 
Zariski open subsets of the moduli space $\M$ of meromorphic Higgs bundles $(\E,\Phi)$ and a symplectic leaf of $\Pic^d(Z)$ 
for some value of $d$. 
The main idea in the proof of Theorem \ref{thm:main} will consist in matching the Dolbeault holomorphic symplectic structure of $\M$ 
with Mukai's symplectic structure on the symplectic leaves of $\Pic^d(Z)$.


\section{Fourier--Laplace--Nahm transform of connections on the Riemann sphere}\label{sec:FLN}

We take up the notations of Section \ref{sec:mod-Hit} with $C=\C P^1$ the complex projective line. 
The singularities will be fixed as follows: 
\begin{itemize}
\item regular singularities (i.e. $0$ irregular type) at $z_j$ with semi-simple residue in the adjoint orbit of 
$$
	\Lambda^j\in\gl_r(\C) = \diag (0,\ldots,0,\lambda^j_{r_j+1},\ldots,\lambda^j_r)
$$ 
	satisfying $\lambda^j_n \neq \lambda^j_m$ for $n> r_j$ and $m\neq n$
\item parabolic weights $\alpha^j_m$ such that $\alpha^j_m = 0 \iff \lambda^j_m = 0$
\item an irregular singularity with $m_{\infty}=1$,  
$$
	A^{\infty}_1 = \diag (\xi_1,\ldots,\xi_1,\ldots \ldots \xi_{\hat{n}},\ldots,\xi_{\hat{n}})
$$
with $\xi_l\neq \xi_m$ for $n\neq m$, the multiplicity of $\xi_l$ being $m_l$, and residue in the adjoint orbit of the element 
$$
	\Lambda^{\infty} = \diag (\lambda^{\infty,1}_1,\ldots ,\lambda^{\infty,1}_{m_1}) \oplus \cdots \oplus 
	\diag(\lambda^{\infty,\hat{n}}_1,\ldots ,\lambda^{\infty,\hat{n}}_{m_{\hat{n}}})
$$
of $\h = \gl_{m_1}(\C) \oplus \cdots \oplus \gl_{m_{\hat{n}}}(\C)$ such that in every eigenspace of $A^{\infty}_1$ 
(corresponding to a term in the decomposition of $\h$) the eigenvalues of $\Lambda^{\infty}$ are distinct
\item parabolic weights $\alpha^{\infty,l}_m>0$. 
\end{itemize}
In \cite{Sz-these} we studied Nahm transform $\Nahm$ for solutions of this type, showed that the category consisting of 
such objects is preserved under the transform, and described the transformation of the singularity parameters 
$z_i,\lambda^i_m,\alpha^j_m,\xi_l,\lambda^{\infty,l}_m,\alpha^{\infty,l}_m$ given above. 
Furthermore, we showed that the transform is an involution (up to a sign), in particular it is a diffeomorphism 
\begin{equation}\label{eq:Nahm}
	\Nahm : \M \to \Mt
\end{equation}
between moduli spaces of solutions of the Hitchin--equations with the given singularity behaviour. 
The transform is defined using a dual variable $\xi\in\Ct$ and twisting the Higgs bundle according to the 
formulas 
$$
	\bar{\partial}_{A}, \Phi -  \frac{\xi}2 \d z, 
$$
letting $\Et\to\Ct$ be the index bundle of the family (i.e. $\Et_{\xi}$ is the cokernel of the associated Dirac-operator), 
$\bar{\partial}_{\hat{A}}$ be induced by the trivial holomorphic bundle of the ambient Hilbert bundle, and 
$\widehat{\Phi}$ induced by multiplication by $-\frac z2\d\xi$. 
The extension over the points $\xi = \xi_l$ can be given either a metric definition (using that one has control on the 
$L^2$-norms of the $1$-forms in the kernel of the adjoint Dirac operator) or alternatively an algebraic one 
(using certain Hecke transforms of sheaves \cite{ASz}). We can now make precise the meaning of Theorem \ref{thm:main}. 

\begin{thm}
The transform (\ref{eq:Nahm}) is a hyper-K\"ahler isometry.
\end{thm}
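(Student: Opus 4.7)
The plan is to invoke Proposition \ref{prop:holomsymp-hypK}: it suffices to show that $\Nahm$ intertwines two of the three complex structures, $I$ and $J$, and that it pulls back the Dolbeault holomorphic symplectic form, i.e. $\Nahm^*\Omega_{\hat{I}}=\Omega_I$. Holomorphicity of $\Nahm$ for the Dolbeault structure $I$ follows from the algebraic description of the transform at the level of spectral sheaves via Hecke modifications on the Hirzebruch surfaces $Z$ and $\hat{Z}$ (Section \ref{sec:sheaves} and \cite{ASz}); holomorphicity for $J$ (the de Rham structure) is the analogous statement in the de Rham picture, both of which are visibly algebraic.

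The core of the argument is the identity $\Nahm^*\Omega_{\hat{I}}=\Omega_I$. As anticipated in Section \ref{sec:sheaves}, the strategy is to identify $\Omega_I$ on $\M$ (and likewise $\Omega_{\hat{I}}$ on $\Mt$) with the Mukai symplectic form on a symplectic leaf of $\Pic^d(Z)$ (respectively $\Pic^{\hat{d}}(\hat{Z})$), and then to verify that on the sheaf side the transform preserves the Mukai pairing. For the identification I would compare the algebraic deformation complex (\ref{eq:Dolcompl-irreg}) on $C$ with the $\GlobExt$-complex computing $\GlobExt^\bullet_{\O_Z}(M,M)$. Using the resolution (\ref{eq:Mresolution}) together with exactness of $\pi_*$ on spectral sheaves, one obtains a natural isomorphism between $\H^1$ of (\ref{eq:Dolcompl-irreg}) and $\GlobExt^1_{\O_Z}(M,M)$ tangent to the prescribed symplectic leaf---the restriction conditions encoded by $\End_{iso}(\E)$ matching exactly those imposed on the jets of $S$ along the fibres $\pi^{-1}(z_j)$. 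Tracking the Yoneda product (\ref{eq:Mukaiform}) through the identification together with Serre duality on $Z$ twisted by the Poisson bivector with degeneracy along $D_{\infty}$ from (\ref{eq:degen-divisor}) should then reproduce the integral (\ref{eq:Dol-hol-sympl-str}), realised as a residue along $\pi^{-1}(D)\cup(y)$ of the wedge of trace forms.

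Granted this on both sides, it remains to show that the spectral-sheaf realisation of $\Nahm$ preserves the Mukai pairing. This is a Fourier--Mukai-type statement: the Nahm kernel, built from the twisted Dirac operator in the analytic picture, corresponds on the algebraic side to a universal sheaf on a correspondence between $Z$ and $\hat{Z}$; an integral transform $\mathbf{R}\hat{\pi}_*(\pi^*(\cdot)\otimes\mathcal{P})$ of this form automatically preserves the Yoneda product on $\GlobExt^\bullet$ up to shift and sign, hence the Mukai form. The main obstacle I anticipate is to make all of this precise in the presence of singularities: one must verify that the analytic extension of $\Et$ over the points $\xi=\xi_l$ provided by $L^2$-methods coincides with the Hecke modifications of the sheaf transform of \cite{ASz}, and that the Mukai pairing extends to the correct symplectic form over the entire symplectic leaf rather than merely on a generic locus. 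Once these compatibilities are in place, the equality of the two holomorphic symplectic forms---and hence the hyper-K\"ahler isometry---follows from Proposition \ref{prop:holomsymp-hypK}.
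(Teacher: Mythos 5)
Your top-level architecture agrees with the paper's: reduce via Proposition \ref{prop:holomsymp-hypK} to preservation of $I$, $J$ and $\Omega_I$, quote \cite{ASz} and \cite{Sz-FourierLaplace} for the two complex structures, and prove $\Nahm^*\Omega_{\hat{I}}=\Omega_I$ by routing both sides through the Mukai pairing on $\Pic^d(Z)$. However, at the decisive step you take a detour that is both unnecessary and unjustified. The paper does \emph{not} need any Fourier--Mukai kernel on a correspondence between $Z$ and $\hat{Z}$: the point, taken from \cite{Sz-these} and \cite{ASz}, is that $(\E,\Phi)$ and $\Nahm(\E,\Phi)$ have \emph{isomorphic spectral sheaves} (equation (\ref{eq:isomspec})) --- the transform merely switches between two readings of one and the same sheaf --- so once the identity $\Omega_I(T,X)=\Omega_{\Mukai}(\tilde T,\tilde X)$ of (\ref{eq:DolbeaultMukai}) is established on each side, preservation of the Mukai pairing is tautological. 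Your substitute claim, that an integral transform $\mathbf{R}\hat{\pi}_*(\pi^*(\cdot)\otimes\mathcal{P})$ ``automatically'' preserves the Yoneda product and hence the Mukai form, is not automatic: compatibility with the pairing (\ref{eq:Mukaiform}) requires compatibility with Serre duality, and here the relevant duality is twisted by a Poisson bivector degenerating along $D_\infty$ and restricted to a symplectic leaf, so this assertion would itself demand a proof comparable in length to the one you are trying to avoid.

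The second, and larger, gap is that the identity (\ref{eq:DolbeaultMukai}) --- which you dispatch with ``should then reproduce the integral (\ref{eq:Dol-hol-sympl-str})'' --- is the actual content of the paper's proof. It requires: choosing the subcomplex (\ref{eq:dualsyzygyiso}) of (\ref{eq:dualsyzygy}) cutting out the symplectic leaf; constructing explicit $L^2$ Dolbeault representatives $(\tilde{\dot{A}},\tilde{\dot{\Phi}})$, $(\tilde{\dot{B}},\tilde{\dot{\Psi}})$ on the sheets of the spectral cover and checking (Lemma \ref{lem:push-forward}) that they are cocycles in (\ref{diagram:Dolres}) pushing forward to the given tangent vectors; computing the Yoneda product as a $(1,1)$-form on each sheet; identifying $\GlobExt^2_{\O_Z}(M,M)$ with $H^1(S,K_S(-(S\cap\pi^{-1}(D))))$ via a degenerating spectral sequence and the Poisson bivector (equation (\ref{eq:ext2h1})); and, after Serre duality, proving convergence of the limiting integral over $U_\varepsilon$ using the decay estimates $|a_{ij}|\leq C|z|^{-1+\delta}$ of Lemma 5.3 of \cite{Biq-Boa} near the punctures and elliptic regularity near the branch locus. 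You correctly flag the singularities as ``the main obstacle,'' but flagging it is not resolving it: without these estimates the integral defining $\Omega_I$ need not even be recovered from the Mukai pairing, since the identification only holds away from $\pi^{-1}(D_{\red}\cup R)$. So the proposal is a correct outline of the strategy with the two essential ingredients --- the isomorphism of spectral sheaves and the analytic proof of (\ref{eq:DolbeaultMukai}) --- missing or replaced by an unproved assertion.
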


\begin{proof}
According to Proposition \ref{prop:holomsymp-hypK}, it is sufficient to show that $\Nahm$ preserves 
the Dolbeault and de Rham complex structures and the Dolbeault holomorphic symplectic structure. 
The statement for the  Dolbeault complex structure is shown in \cite{ASz} where we give the birational 
transformations needed to define the transform on the level of spectral sheaves. 
Preservation of the de Rham complex structure is proved in \cite{Sz-FourierLaplace} where we identify 
the transformation of the meromorphic connection with the Fourier--Laplace transform of the underlying 
holonomic $\mathcal{D}$-module. Therefore, it is sufficient to prove the following. 
\begin{prop}\label{prop:NahmDolbeault}
The pull-back by $\Nahm$ of the Dolbeault holomorphic symplectic structure $\Omega_{\hat{I}}$ on $\Mt$ is $\Omega_I$ on $\M$.
\end{prop}
The rest of this section is devoted to proving this proposition. 
Consider two $1$-parameter families $(\E(t),\Phi(t)),(\E(x),\Phi(x))$ of elements of $\M$ for $t,x\in \C$ specialising to the same point 
$(\E,\Phi)\in\M$ at $t=0$ and $x=0$ respectively, giving rise to tangent vectors $T,X\in T_{(\E,\Phi)}\M$. 
On the other hand, consider the associated families of spectral sheaves $M_{(\E(t),\Phi(t))}$ and $M_{(\E(x),\Phi(x))}$ 
in $\Pic^d(Z)$; they give rise to tangent vectors $\tilde{T},\tilde{X}\in T_{M_{(\E,\Phi)}}\Pic^d(Z)$. The key fact is that 
\begin{equation}\label{eq:DolbeaultMukai}
	\Omega_I(T,X) = \Omega_{\Mukai}(\tilde{T},\tilde{X}). 
\end{equation}
This observation is due to J. Hurtubise \cite{Hur} in the case of regular Higgs bundles and J. Harnad and J. Hurtubise \cite{Har-Hur} 
in the meromorphic setting. The proof of \cite{Hur} relies on an explicit residue calculation, the one of \cite{Har-Hur} takes place 
in a more general setting of multi-Hamiltonian structures but is restricted to the case of a trivial bundle over $\C P^1$; 
in order to be self-contained, we give below an algebraic geometric proof valid for Higgs bundles with arbitrary underlying 
holomorphic bundle over any curve, with semi-simple irregular singularities as in Section \ref{sec:mod-Hit}. 

Let us show how (\ref{eq:DolbeaultMukai}) implies Proposition \ref{prop:NahmDolbeault}: 
it follows from \cite{Sz-these} and \cite{ASz} that $(\E,\Phi)$ and $\Nahm(\E,\Phi)$ have isomorphic spectral sheaves 
\begin{equation*}
	M_{(\E,\Phi)} \cong M_{(\widehat{\E},\widehat{\Phi})}.  
\end{equation*}
The same statement then clearly holds for families too 
\begin{equation}\label{eq:isomspec}
	M_{(\E(t),\Phi(t))} \cong M_{(\widehat{\E}(t),\widehat{\Phi}(t))}  
\end{equation}
(and similarly for $t$ replaced by $x$). Let now $\hat{T}=D_{(\E,\Phi)}\Nahm(T),\hat{X}=D_{(\E,\Phi)}\Nahm(X)$ \
be the tangent vectors defined by the families 
$$
	(\widehat{\E}(t),\widehat{\Phi}(t)), (\widehat{\E}(x),\widehat{\Phi}(x)); 
$$
then according to (\ref{eq:isomspec}) and (\ref{eq:DolbeaultMukai}) both 
$$
	\Omega_I(T,X) \quad \mbox{and}  \quad \Omega_{\hat{I}}(\hat{T},\hat{X})
$$
are equal to 
$$
	\Omega_{\Mukai}(\tilde{T},\tilde{X}). 
$$

Therefore, there only remains to prove the identity (\ref{eq:DolbeaultMukai}). 
For this purpose we first need to choose a complex whose hypercohomology groups computes  
$\GlobExt_{\O_Z}^{\bullet}(M_{(\E,\Phi)},M_{(\E,\Phi)})$. 
As we have a syzygy for $M_{(\E,\Phi)}$ it is natural to take the sheaves of homomorphisms of $\O_Z$-modules from (\ref{eq:Mresolution})  to 
$M_{(\E,\Phi)}$: 
\begin{equation}\label{eq:dualsyzygy}
	\Hom_{\O_Z}(\pi^* \E \otimes \O(1),M_{(\E,\Phi)})
	\longrightarrow
	\Hom_{\O_Z}(\pi^*(\E\otimes_{\O_C} L^{\vee}),M_{(\E,\Phi)})
\end{equation}
with the non-zero sheaves in degrees $0$ and $1$ and morphism 
\begin{equation}\label{eq:dualsyzygymorphism}
	\Hom(\pi^*\Phi\otimes y - \pi^*{Id}_{\E}\otimes x , Id_{M}).
\end{equation}
\begin{lem}\label{lem:Dolcompl-Poisson}
The push-forward $R\pi_*$ of (\ref{eq:dualsyzygy}) is 
\begin{equation}\label{eq:dualDolcompl-Poisson}
	\End_{\O_C}(\E)
	\xrightarrow{\ad_{\Phi}}
	\End_{\O_C}(\E)\otimes_{\O_C} L. 
\end{equation}
In particular, the tangent space $T_{M_{(\E,\Phi)}}\Pic^d(Z)$ is naturally isomorphic to the first hypercohomology 
$\H^1(\ad_{\Phi})$ of (\ref{eq:dualDolcompl-Poisson}). 
\end{lem}
\begin{proof}[Proof of the Lemma]
The support of the sheaves appearing in (\ref{eq:dualsyzygy}) is finite over $C$, so the action of $R\pi_!$ on the complex is 
equal to that of $R\pi_*$ which in turn is simply given by $\pi_*$. We have 
$$
		\Hom_{\O_Z}(\pi^* (\E\otimes_{\O_C} L^{\vee}) \otimes \O(1),M_{(\E,\Phi)})
		\cong 
		\Hom_{\O_Z}(\pi^* \E \otimes \O(1),M_{(\E,\Phi)} \otimes \pi^* L)
$$
and by the projection formula 
$$
	\pi_*(M_{(\E,\Phi)}\otimes \pi^* L) = \E\otimes_{\O_C} L.
$$ 
It then follows from local Verdier duality that the sheaves in the push-forward of (\ref{eq:dualsyzygy}) agree with the sheaves 
appearing in (\ref{eq:dualDolcompl-Poisson}).
As for the push-forward of the morphism, we know that multiplication by $x,y$ push down to $\Phi,\mbox{Id}_{\E}$ respectively 
on the second factor $\E$ in $\Hom(\E,\E)\otimes L = \Hom(\E,\E\otimes L)$. 
Hence using again the projection formula the push-forward of the map (\ref{eq:dualsyzygymorphism}) is 
$$
	\Phi \circ - \circ \Phi = \ad_{\Phi}. 
$$
\end{proof}

\begin{rem}\label{rem:symplectic-leaf}
We see that (\ref{eq:dualDolcompl-Poisson}) is not quasi-isomorphic to (\ref{eq:Dolcompl-irreg}) in agreement  
with our observation in Proposition \ref{prop:DolL2} that (\ref{eq:Dolcompl-irreg}) is the tangent space to 
a symplectic leaf of the larger Poisson space $\Pic^d(Z)$ where the irregular types and residues may vary; 
in fact (\ref{eq:Dolcompl-irreg}) is a subcomplex of (\ref{eq:dualDolcompl-Poisson}). 
\end{rem}

Let us therefore define the sub-complex 
\begin{equation}\label{eq:dualsyzygyiso}
		\widetilde{\Hom}_{\O_Z}(\pi^* \E \otimes \O(1),M_{(\E,\Phi)}) \longrightarrow 
			\Hom_{\O_Z,iso}(\pi^*(\E\otimes_{\O_C} L^{\vee}),M_{(\E,\Phi)})
\end{equation}
of (\ref{eq:dualsyzygy}) as the pull-back by $\pi$ of (\ref{eq:Dolcompl-irreg}). 
In concrete terms, a local section of 
$$
	\Hom_{\O_Z}(\pi^*(\E\otimes_{\O_C} L^{\vee}),M_{(\E,\Phi)})
$$ 
is defined to belong to $\Hom_{\O_Z,iso}$ if and only if its image by $\pi$ lies in $\End_{iso}(\E)\otimes K_C$, 
and similarly for $\widetilde{\Hom}_{\O_Z}$ and $\widetilde{\End}$. 
This subcomplex then governs the deformations of the sheaves giving rise to Higgs bundles with prescribed singularities 
near the punctures as in Section \ref{sec:mod-Hit}. 

We will use the Dolbeault resolution of the complex (\ref{eq:dualsyzygyiso}) to compute the groups 
$\GlobExt_{\O_Z}^1(M_{(\E,\Phi)},M_{(\E,\Phi)})$. 
Specifically, given tangent vectors 
$$
	T=[(\dot{A},\dot{\Phi})],X=[(\dot{B},\dot{\Psi})]\in \H^1(\ad_{\Phi}:\widetilde{\End}(\E)\to \End_{par}\otimes K_C) \cong T_{(\E,\Phi)}\M
$$ 
as in (\ref{eq:tangentvector}) we will construct first hypercohomology classes 
$$
	\tilde{T} = [(\tilde{\dot{A}},\tilde{\dot{\Phi}})],\tilde{X} = [(\tilde{\dot{B}},\tilde{\dot{\Psi}})]
$$ 
in the complex (\ref{eq:dualsyzygyiso}) mapping to $(\dot{A},\dot{\Phi})$ and $(\dot{B},\dot{\Psi})$ respectively  via the 
push-forward map of the lemma and check that the pairing (\ref{eq:Mukaiform}) applied to these lifts 
agrees with (\ref{eq:Dol-hol-sympl-str}). 
To find the lifts, we will use the $L^2$ Dolbeault resolution of the $\Hom$-sheaves in these complexes. 

Notice first that is is sufficient to prove (\ref{eq:DolbeaultMukai}) Zariski locally on $\M$, so we may 
assume that the schematic support $S=S_{(\E,\Phi)}$ of $M_{(\E,\Phi)}$ is reduced and smooth away from $\pi^{-1}(D)$. 
For such a Higgs bundle $(\E,\Phi)\in\M$, it is possible to find Zariski locally on $C$ a trivialisation 
$\e_1,\ldots,\e_r$ of $\E$ with respect to which $\Phi$ is diagonal: indeed, over the Zariski open 
set $U\subset C$ over which $S$ is non-ramified, the geometric fibers 
of $S$ consist of $r$ distinct points $x_1(z),\ldots,x_r(z)\in L|_z$ depending on $z\in U$ 
(where as usual we identify a point $[X:Y]\in Z$ with $x = X/Y \in L$ if $Y\neq 0$). 
Notice that these points might get permuted  as the point $z$ moves along a non-trivial loop in $U$; 
let us therefore restrict $z$ to lie in an analytic disc $\Delta \subset U$ where the labelling of 
$x_1(z),\ldots,x_r(z)$ may be well defined without any ambiguity. 
Then $S\cap\pi^{-1}(\Delta)$ consists of $r$ disjoint connected components $S_1,\ldots,S_r$ 
and the map $\pi_i = \pi|_{S_i} :S_i\to \Delta$ is biholomorphic. A holomorphic coordinate on $S_i$ 
can be chosen as $w_i = \pi_i^* z$. Furthermore, the restriction $M|_{S_i}$ of $M$ to $S_i$ is a torsion free sheaf, 
i.e. a line bundle since $S_i$ is a smooth curve. As $S_i$ is affine, $M|_{S_i}$ is trivial, and we let 
$\m_i$ stand for a generator. We now set $\e_i = \pi_*\m_i$; it is then clear that $\e_1,\ldots,\e_r$ 
trivialises $\E$ over $\Delta$ and that with respect to this trivialisation $\Phi$ is diagonal 
\begin{equation}\label{eq:Phidiag}
	\Phi(z) = \diag (x_1(z),\ldots,x_r(z)). 
\end{equation}
In particular, we infer that in the exact sequence (\ref{eq:Mresolution}) $\m_i$ is the class $[\e_i]$ of $\e_i$ modulo the image of $\Phi - x_i$. 
Now, the sheaves appearing in (\ref{eq:dualsyzygyiso}) are supported on $S_{(\E,\Phi)}$, and 
their restrictions to the local sheet $S_j$ are free $\O_{S_j}$-modules generated by the sections 
$$
	\pi^*\e_1^{\vee} \otimes \m_j,\ldots ,\pi^*\e_r^{\vee} \otimes \m_j 
$$
and 
$$
	\pi^*\e_1^{\vee} \otimes \m_j \d w_j,\ldots ,\pi^*\e_r^{\vee} \otimes \m_j \d w_j
$$
respectively, where $\otimes$ stands for $\otimes_{\O_{S_j}}$ and $\e_1^{\vee} ,\ldots ,\e_r^{\vee}$ denotes 
the dual basis of $\e_1,\ldots,\e_r$. Let us now use the trivialisation $\e_1,\ldots,\e_r$ to write 
\begin{align*}
	\dot{A} &= a \d \bar{z} & \dot{\Phi} & = \phi \d z \\
	\dot{B} &= b \d \bar{z} & \dot{\Psi} & = \psi \d z 
\end{align*}
where $a= (a_{ij}),b,\phi,\psi:\Delta\to\gl_r(\C)$ are $L^2$ functions satisfying 
\begin{equation}\label{eq:cocycleC}
	\bar{\partial}(\phi \d z) + [a \d \bar{z},\Phi] = 0, \quad \bar{\partial}(\psi \d z) + [b \d \bar{z},\Phi] = 0. 
\end{equation}
By Theorem 5.4 \cite{Biq-Boa}, the tangent space $T_{(\E,\Phi)}\M$ is isomorphic to the first $L^2$-cohomology of (\ref{eq:l2compl}). 
We define a Hermitian metric on $Hom(\pi_S^*E,M_{(\E,\Phi)})$ by letting 
$$
	\langle \pi^*\e_i^{\vee}\otimes \m_j , \pi^*\e_{i'}^{\vee}\otimes \m_{j'} \rangle 
		= \delta_{jj'} h(\e_j,\e_j) h (\e_i^{\vee}, \e_{i'}^{\vee} ) 
$$
where by an abuse of notation the metric $h$ on the right hand side refers both to the Hermitian metric on $E$ 
and the one induced by $h$ on $E^{\vee}$. 
We also define the Riemannian metric $|\d w_j|^2$ on $S_j$. 
For ease of notation we set $M = M_{(\E,\Phi)}$. 
Splitting $1$-forms according to their type it then follows that the Dolbeault resolution of the complex (\ref{eq:dualsyzygyiso}) is 
quasi-isomorphic to 
\begin{align}\label{diagram:Dolres}
	\xymatrix{
	Hom_{L^2(S)}(\pi_S^*E,M\otimes \Omega_S^{0,1}) \ar[r]^{\pi_S^*\Phi - x Id_E} & Hom_{L^2(S)}(\pi_S^*E,M\otimes \Omega_S^{1,1})  \\
	Hom_{L^2(S)}(\pi_S^*E,M) \ar[r]^{\pi_S^*\Phi - x Id_E} \ar[u]_{\bar{\partial}} & Hom_{L^2(S)}(\pi_S^*E,M\otimes \Omega_S^{1,0})
		 \ar[u]_{\bar{\partial}}
	}
\end{align}
where $\pi_S$ is the restriction of $\pi$ to $S=S_{(\E,\Phi)}$, the vertical maps are induced by 
$\pi_S^*\bar{\partial}^{\E}$ and the Dolbeault operator $\bar{\partial}^{M}$ of $M$. Define the elements 
\begin{align*}
	(\tilde{\dot{A}},\tilde{\dot{\Phi}}), (\tilde{\dot{B}},\tilde{\dot{\Psi}})\in &
	Hom_{L^2(\pi^{-1}(\Delta))} (\pi_S^*E,M_{(\E,\Phi)}) \otimes \Omega_S^{0,1} \\
	& \oplus Hom_{L^2(\pi^{-1}(\Delta))}(\pi_S^*E,M_{(\E,\Phi)})\otimes \Omega_S^{1,0}
\end{align*}
componentwise on $S_j$ by $(\tilde{a}_j\d \bar{w}_j,\tilde{\phi}_j\d w_j)$ and $(\tilde{b}_j\d \bar{w}_j,\tilde{\psi}_j\d w_j)$ respectively, 
where 
\begin{align}
	\tilde{a}_j (w_j): \pi_j^*\e_i & \mapsto a_{ij}(\pi(w_j)) \m_j \label{eq:A}\\
	\tilde{\phi}_j (w_j): \pi_j^*\e_i & \mapsto \phi_{ij}(\pi(w_j)) \m_j. \label{eq:Phi}
\end{align}
and 
\begin{align}
	\tilde{b}_j (w_j): \pi_j^*\e_i & \mapsto b_{ij}(\pi(w_j)) \m_j \label{eq:B}\\
	\tilde{\psi}_j (w_j): \pi_j^*\e_i & \mapsto \psi_{ij}(\pi(w_j)) \m_j. \label{eq:Psi}
\end{align}
\begin{lem}\label{lem:push-forward}
\begin{enumerate}
\item The above local definitions match up to define global $L^2$ sections 
$$
	(\tilde{\dot{A}},\tilde{\dot{\Phi}}),(\tilde{\dot{B}},\tilde{\dot{\Psi}}) 
$$
away from $\pi^{-1}(D_{\red}\cup R)$ where $D_{\red}$ is the reduced divisor of $D$ and $R$ is the branch locus of $\pi:S\to C$. 
\label{lem:push-forward1}
\item The push-forwards of these sections by $\pi_*$ are equal to $(\dot{A},\dot{\Phi})$ and $(\dot{B},\dot{\Psi})$, respectively. 
	\label{lem:push-forward2}
\item Furthermore, $(\tilde{\dot{A}},\tilde{\dot{\Phi}})$ and $(\tilde{\dot{B}},\tilde{\dot{\Psi}})$ are $1$-cocycles in the Dolbeault resolution (\ref{diagram:Dolres}). \label{lem:push-forward3}
\end{enumerate}
\end{lem}
\begin{proof}[Proof of the Lemma]
(\ref{lem:push-forward1}) As $z$ describes a loop about a branching point of $S$, the sheets $S_i$ undergo 
some permutation $\sigma\in\mathfrak{S}_r$. The trivialisations $\e_1,\ldots,\e_r$ and $\m_1,\ldots,\m_r$ correspondingly 
transform by applying $\sigma$ on their labels. Clearly, the maps (\ref{eq:A}) then do not change, just the order in which they are listed. 
The same argument works for the support of $D$. The section so obtained is $L^2$ because by the definition of the Hermitian 
metrics on $Hom$-sheaves $\pi$ preserves the Hermitian metrics. 

(\ref{lem:push-forward2}) It is enough to show this locally. 
By definition one has $\pi_*\m_i = \e_i$ and $\pi_*\d w_i = \d z$. 
To compute the action of the push-forward of $\tilde{\dot{\Phi}}$ on $\e_i$ we need to apply $\tilde{\dot{\Phi}}$ to $\pi^*\e_i$ 
and take the push-forward of the result. In view of (\ref{eq:Phi}) this is then given by 
$$
	\e_i \mapsto \sum_{j=1}^r \phi_{ij}(\pi(w_j)) \pi_*(\m_j \d w_j) = \sum_{j=1}^r \phi_{ij}(z) \e_j \d z
$$
which is precisely the action of $\dot{\Phi}$. This proves the first statement for $\tilde{\dot{\Phi}}$. 
The proof for the components $\dot{A},\dot{B},\dot{\Psi}$ is analogous. 

(\ref{lem:push-forward3}) 
Since the sections $\pi_j^*\e_i$ and $\m_j$ are by definition holomorphic for the 
complex structures $\pi_S^*\bar{\partial}^{\E}$ and $\bar{\partial}^{M}$ respectively, by (\ref{eq:Phi}) the matrix of 
$\bar{\partial}\tilde{\dot{\Phi}}$ in these trivialisations is simply 
$$
	\frac{\partial \phi_{ij}}{\partial \bar{w}_j} \d w_j \wedge \d \bar{w}_j, 
$$
i.e. the $i,j$-entry of $\pi_j^*(\bar{\partial}(\phi \d z))$. 
On the other hand, it follows from (\ref{eq:Phidiag}) that 
$$
	\tilde{\dot{A}}\circ\pi^*\Phi : \pi^*\e_i  \mapsto a_{ij} \m_j  x_i  \wedge \d \bar{w}_j 
$$
and by definition $x = x_j$ on $S_j$, hence 
$$
	(\pi_j^*\Phi - x Id_E ) (\pi_j^*\e_i) =a_{ij} \m_j   (x_i - x_j)  \wedge \d \bar{w}_j. 
$$
Now by (\ref{eq:Phidiag}) this expression is equal to the pull-back by $\pi_j$ of the $i,j$-entry of $[a \d \bar{z},\Phi]$ 
with respect to the trivialisation $\e_1,\ldots, \e_r$. Thus, the cocycle condition (\ref{eq:cocycleC}) for $(\dot{A},\dot{\Phi})$ 
implies the cocycle condition for $(\tilde{\dot{A}},\tilde{\dot{\Phi}})$.
\end{proof}
As (\ref{eq:dualsyzygyiso}) is a subcomplex of (\ref{eq:dualsyzygy}) whose hypercohomology groups $\H^{\bullet}$ are isomorphic to 
$\GlobExt_{\O_Z}^{\bullet}(M_{(\E,\Phi)},M_{(\E,\Phi)})$, it follows from the lemma that the couple 
$$
	(\tilde{\dot{A}},\tilde{\dot{\Phi}})
$$
represents an element of $\GlobExt_{\O_Z}^{1}(M_{(\E,\Phi)},M_{(\E,\Phi)}) = T_{M_{(\E,\Phi)}}\Pic^d$ 
mapping to the class of $(\dot{A},\dot{\Phi})$ in the hypercohomology group $\H^1(\ad_{\Phi})=T_{(\E,\Phi)}\M$ 
of the complex (\ref{eq:dualDolcompl-Poisson}).
The Yoneda product of the extension classes 
\begin{equation}\label{eq:liftedtangentvectors}
	\tilde{T} = [(\tilde{\dot{A}},\tilde{\dot{\Phi}})], \tilde{X} =[(\tilde{\dot{B}},\tilde{\dot{\Psi}})] \in\GlobExt_{\O_Z}^1(M_{(\E,\Phi)},M_{(\E,\Phi)})
\end{equation}
is then a class in  $\GlobExt_{\O_Z}^2(M_{(\E,\Phi)},M_{(\E,\Phi)})$. In terms of the Dolbeault resolution (\ref{diagram:Dolres}) 
of (\ref{eq:dualsyzygyiso}), this latter group has a subgroup represented by sections of $Hom_{L^2(S)}(\pi_S^*E,M)\otimes \Omega_S^{2}$ 
to which the Yoneda product belongs. 
The paring is defined using composition of homomorphisms coupled with exterior product of differential forms. 
As $\Omega_S^{2,0} = 0 = \Omega_S^{0,2}$ the pairing is trivial between $\tilde{\dot{A}}$ and $\tilde{\dot{B}}$, 
and similarly for $\tilde{\dot{\Phi}}$ and $\tilde{\dot{\Psi}}$. 
On an analytic disc $\Delta$ away from the branch locus $R$ of $S\to C$ where the formulae (\ref{eq:A})--(\ref{eq:Psi}) hold, 
the homomorphism part of the pairing of $\tilde{\dot{A}}$ and $\tilde{\dot{\Psi}}$ is given as follows: 
on the right hand side of (\ref{eq:A}) we take representatives $\e_j$ for $\m_j$, then we apply (\ref{eq:Psi}) to these representatives 
\begin{align}
	\pi^* \e_i  & \mapsto (a_{i1}\tilde{\psi}(\pi^*\e_1), \ldots, a_{ir}\tilde{\psi}(\pi^*\e_r)) \notag \\
			& = (\sum_j a_{ij}\psi_{j1}\m_1, \ldots, \sum_j a_{ij}\psi_{jr}\m_r). \label{eq:acomposepsi}
\end{align}

\begin{lem}
The Poisson bivector induces an isomorphism  
\begin{equation}\label{eq:ext2h1}
	\GlobExt_{\O_Z}^2(M_{(\E,\Phi)},M_{(\E,\Phi)}) \cong H^1(S_{(\E,\Phi)},K_S(-(S\cap \pi^{-1}(D))))
\end{equation}
(where $S = S_{(\E,\Phi)}$). 
\end{lem}
\begin{proof}[Proof of the Lemma]
The argument follows Remark 8.17 of \cite{DM}; see also Section 5.4 of \cite{GH}. 
The standard spectral sequence (associated to the filtration by the degree of a projective resolution) for the groups 
$\GlobExt_{\O_Z}^{\bullet}(M_{(\E,\Phi)},M_{(\E,\Phi)})$ has 
$$
	E_2^{p,q} = H^p(Z, \Ext_{\O_Z}^q(M_{(\E,\Phi)},M_{(\E,\Phi)})), \quad E_{\infty}^{p,q} \Rightarrow 
	\GlobExt_{\O_Z}^{p+q}(M_{(\E,\Phi)},M_{(\E,\Phi)}).
$$
Now the sheaves $\Ext_{\O_Z}^q(M_{(\E,\Phi)},M_{(\E,\Phi)})$ are supported on the smooth curve $S_{(\E,\Phi)}$, so 
$E_2^{p,q} = 0$ for all $p\geq 2$. On the other hand, since these sheaves 
can be computed using a length $2$ projective resolution of $M_{(\E,\Phi)}$ we also have 
$E_2^{p,q} = 0$ for all $q\geq 2$. It follows that the only term of total degree $2$ in $E_2^{p,q}$ is 
$H^1(Z, \Ext_{\O_Z}^1(M_{(\E,\Phi)},M_{(\E,\Phi)}))$. It is also clear from the above vanishing properties that 
the differential $d_2$ vanishes, so that the spectral sequence degenerates at the second term $E_2$: 
$$
	\GlobExt_{\O_Z}^{2}(M_{(\E,\Phi)},M_{(\E,\Phi)}) = H^1(Z, \Ext_{\O_Z}^1(M_{(\E,\Phi)},M_{(\E,\Phi)})).
$$
Finally, notice that a local trivialisation $\m$ of $M_{(\E,\Phi)}$ on $S_{(\E,\Phi)}$ in the analytic topology 
induces the local trivialisation $\m^{\vee}\otimes\m$ of $\Hom(M_{(\E,\Phi)},M_{(\E,\Phi)})$ which in turn yields an isomorphism  
$$
	\Ext_{\O_Z}^1(M_{(\E,\Phi)},M_{(\E,\Phi)})\cong \Ext_{\O_Z}^1(\O_S,\O_S) \cong N_{S|Z}
$$
from the first $\Ext$-sheaf to the normal bundle of $S=S_{(\E,\Phi)}$ in $Z$. The Poisson bivector then identifies 
$$
	N_{S|Z} \cong K_S(-(S\cap \pi^{-1}(D)))
$$
because the curve $S \setminus D_{\infty}$ is Lagrangian in the open symplectic surface $Z \setminus D_{\infty}$ (where 
$D_{\infty} = D + 2(y)$ is the degeneracy divisor of the Poisson bivector) and $S$ does not intersect the divisor at infinity $(y)$. 
\end{proof}

It follows from the proof of the lemma that the identification between elements of 
$$
	Hom_{L^2(S)}(\pi_S^*E,M)\otimes \Omega_S^{2}
$$
and cohomology classes in (\ref{eq:ext2h1}) is induced in the local description (\ref{eq:acomposepsi}) by taking 
the lift $\e_i$ of $\m_i$ and associating to a given homomorphism as above the coefficient of 
$\pi^* \e_i^{\vee}\otimes\m_i$ on the sheet $S_i$. 
Therefore, the Yoneda product of the classes $\tilde{\dot{A}}, \tilde{\dot{\Psi}}$ is locally represented by the $(1,1)$-form 
\begin{equation*}
	\tr (\tilde{\dot{\Psi}}\wedge \tilde{\dot{A}}) = \sum_{j=1}^r a_{ij}\psi_{ji} \d w_i \wedge \d \bar{w}_i 
\end{equation*}
on the sheet $S_i$ for all $1\leq i\leq r$. A similar analysis goes through for the couple $\tilde{\dot{B}},\tilde{\dot{\Phi}}$, 
and the Yoneda product of the classes in (\ref{eq:liftedtangentvectors}) is represented by 
\begin{equation}\label{eq:trace}
	\sum_{j=1}^r (a_{ij}\psi_{ji} - b_{ij}\phi_{ji}) \d w_i \wedge \d \bar{w}_i 
\end{equation}
on the sheet $S_i$. 

Finally, we need to apply Serre duality to this class. Notice that the Serre dual of the vector space on the right hand side of 
(\ref{eq:ext2h1}) is 
$$
	H^0(S,\O_S(S\cap \pi^{-1}(D))). 
$$
It fits into the short exact sequence 
$$
	0 \to H^0(S,\O_S) \to H^0(S,\O_S(S\cap \pi^{-1}(D))) \to \oplus_{k=1}^n\oplus_{i=1}^r \C_{(z_k,x_i)}^{m_k+1} \to 0 
$$
where $x_1,\ldots, x_r\in L_{z_k}$ correspond to the intersection points $S\cap\pi^{-1}(z_k)$. 
This exact sequence splits and the map 
$$
	H^1(S_{(\E,\Phi)},K_S(-(S\cap \pi^{-1}(D)))) \times \oplus_{k=1}^n \oplus_{i=1}^r \C_{(z_k,x_i)}^{m_k+1} \to \C
$$
describes the infinitesimal variation of the singularity behaviour of the Higgs field associated to a variation of spectral sheaf. 
As in our case we assumed that the classes $\dot{A},\dot{\Psi}$ do not modify the polar part of the Higgs field, it follows that 
the above map is $0$. On the other hand, the map 
$$
	H^1(S_{(\E,\Phi)},K_S(-(S\cap \pi^{-1}(D)))) \times H^0(S,\O_S) \to \C
$$
is simply given by integration of smooth two-forms on $S$. For any $\varepsilon > 0$ let  
$$
	U_{\varepsilon} \subset C 
$$
be the complement of the union of the closed discs $\bar{B}_{\varepsilon}$ of radius $\varepsilon$ centered at the points of $D_{\red}\cup R$. 
Consider a finite covering 
$$
	U_{\varepsilon}  \subset \Delta_1 \cup \cdots \cup \Delta_N
$$
(where $N$ might depend on $\varepsilon$) by analytic discs as above and 
$$
	\rho_1, \ldots, \rho_N
$$
a partition of unity subordinate to this covering. They induce a covering 
$$
	\pi_S^{-1}(\Delta_{\alpha})
$$
of $\pi_S^{-1}(U_{\varepsilon})$ and a partition of unity 
$$
	\pi_S^*(\rho_1) , \ldots, \pi_S^*(\rho_N)
$$ 
subordinate to it. 
We find 
\begin{align*}
	\Omega_{\Mukai} (\tilde{T},\tilde{X})  & = 
		\lim_{\varepsilon\to 0}\int_{\pi_S^{-1}(U_{\varepsilon})} \sum_{\alpha =1}^N \pi_S^*(\rho_{\alpha}) 
			\tr(\tilde{\dot{\Psi}}\wedge \tilde{\dot{A}} - \tilde{\dot{\Phi}}\wedge \tilde{\dot{B}} ) \\
		& = \lim_{\varepsilon\to 0} \sum_{\alpha =1}^N 
		\sum_i \int_{S_{i,\alpha}}\pi_{i,\alpha}^*(\rho_{\alpha}) \sum_j (a_{ij}\psi_{ji} - b_{ij}\phi_{ji}) \d w_{i,\alpha} \wedge \d \bar{w}_{i,\alpha}
\end{align*}
where the second equality follows from the formula (\ref{eq:trace}), and the labeling of the sheets $S_{1,\alpha},\ldots,S_{r,\alpha}$ 
of $\pi_S^{-1}(\Delta_{\alpha})$ makes sense locally over $\Delta_{\alpha}$. Now the changes of variables 
$$
	w_{i,\alpha}\in S_{i,\alpha} \mapsto z_{\alpha}  \in \Delta_{\alpha} 
$$
transform the above expression into 
\begin{align}
	& \lim_{\varepsilon\to 0} \sum_{\alpha =1}^N \int_{\Delta_{\alpha}} \rho_{\alpha} 
	\sum_i \sum_j (a_{ij}\psi_{ji} - b_{ij}\phi_{ji})  \d z_{\alpha}  \wedge \d\bar{z}_{\alpha}  \notag \\
	& = \lim_{\varepsilon\to 0} \int_{U_{\varepsilon}} \tr (\dot{\Psi}\wedge \dot{A} - \dot{\Phi}\wedge \dot{B}) . \label{eq:limitintegral}
\end{align}
Now near a puncture $z_k$ in general there does not exist a trivialisation $\e_1,\ldots, \e_r$ in which $\Phi$ is diagonal. 
However, by Lemma 5.3 \cite{Biq-Boa} there exist $C,\delta > 0$ and a local trivialisation $\e_1,\ldots ,\e_r$ such that 
for all $i,j$ the harmonic representatives $a,\psi$ satisfy the estimates 
$$
	|a_{ij}| \leq C |z|^{-1+\delta} \quad |\psi_{ji}| \leq C |z|^{-1+\delta}
$$
where $z$ is a local coordinate centered at $z_k$. We infer 
$$
	|a_{ij}\psi_{ji}| \leq  C^2 |z|^{-2+2\delta}
$$
and similarly for $b\phi$. As a consequence one gets 
$$
	\int_{\bar{B}_{\varepsilon}(z_k)} |a_{ij}\psi_{ji}| \d z \wedge \d \bar{z} \leq  C_2 \varepsilon^{2\delta} \to 0
$$
as $\varepsilon\to 0$. 
On the other hand, near a point $r_k$ of the branch locus $R$ it follows from usual elliptic regularity that 
the harmonic representatives $a,\psi$ are smooth, so in any local trivialisation $\e_1,\ldots ,\e_r$ clearly 
$$
	\int_{\bar{B}_{\varepsilon}(r_k)} |a_{ij}\psi_{ji}| \d z \wedge \d \bar{z} \to 0
$$
as $\varepsilon\to 0$. 
Together these estimates imply that the limit (\ref{eq:limitintegral}) coincides with (\ref{eq:Dol-hol-sympl-str}).
\end{proof}

\bibliography{isometry}

\end{document}